\numberwithin{equation}{section}
\newtheorem{theorem}{Theorem}
\newtheorem{proposition}[theorem]{Proposition}
\newtheorem{lemma}[subsection]{{\bf Lemma}}
\newtheorem{remark}[subsection]{Remark}
\newcommand{\al}{\alpha}
\newcommand{\be}{\beta}
\newcommand{\ga}{\gamma}
\newcommand{\Q}{\mbox{$\mathbb Q$}}
\begin{document}
\title[Prime powers in sums of terms of binary recurrence sequences]{Prime powers in sums of terms of binary recurrence sequences} 

\author[Mazumdar]{Eshita Mazumdar}
\address{Eshita Mazumdar,  Departament of Mathematics, Indian Institute of Technology Bombay\\
Powai, Mumbai-400 076, India}
\email{tuli.mazumdar@gmail.com}

\author[Rout]{S. S. Rout}
\address{Sudhansu Sekhar Rout, Departament of Mathematics\\ Harish-Chandra Research Institute\\ Chhatnag Road, Jhunsi\\ India, 211019}
\email{lbs.sudhansu@gmail.com}

\thanks{2010 Mathematics Subject Classification: Primary 11B39, Secondary 11D45, 11J86.  \\
Keywords: Balancing numbers, Diophantine equations, linear forms in logarithms, reduction method}
\maketitle
\pagenumbering{arabic}
\pagestyle{headings}

\begin{abstract}
Let $\{u_{n}\}_{n \geq 0}$ be a non-degenerate binary recurrence sequence with positive discriminant and $p$ be a fixed prime number. In this paper, we have shown a finiteness result for the solutions of the Diophantine equation $u_{n_{1}} + u_{n_{2}} + \cdots + u_{n_{t}} = p^{z}$ with $n_1 \geq n_2 \geq \cdots \geq n_t\geq 0$. Moreover, we explicitly find all the powers of three which are sums of three balancing numbers using the lower bounds for linear forms in logarithms. Further, we use a variant of Baker-Davenport reduction method in Diophantine approximation due to Dujella and Peth\H{o}.
 
\end{abstract}

\section{Introduction}
There are many interesting Diophantine equations arises when one study the intersection of two sequences of positive integers. More precisely, one can ask when the terms of a fixed binary recurrence sequence be perfect powers, factorials or a combinatorial numbers etc. For example, one can consider the solvability of the Diophantine equation 
\begin{equation}\label{eq1}
u_{n}  = x^{z}
\end{equation}
in integers $n,x,z$ with $z\geq 2$, where $\{u_{n}\}_{n\geq 0}$ is a linear recurrence sequence.  Peth\H{o} \cite{Petho1982} and Shorey-Stewart \cite{Shorey1983} independently proved under certain natural assumptions that \eqref{eq1} contain only finitely many perfect powers. The problem of finding all perfect powers in Fibonacci sequence  has a very rich history \cite{Cohn1964,Petho1983} and this problem was quite open for a long time. In 2006, Bugeaud, Mignotte, and Siksek \cite{Bugeaud2006} proved that $(n,x,z) \in \{(0, 0, z), (1, 1, z), (2, 1, z), (6, 2, 3), (12, 12, 2)\}$ are the only solutions of \eqref{eq1} when $u_{n}$ is the Fibonacci sequence, using both classical and the modular approach. In the same paper, they also studied the equation \eqref{eq1} for the Lucas number sequence. In 2008, A. Peth\H{o} \cite{Petho1991} (see also \cite{Cohn1996}) solved the equation \eqref{eq1} when $u_{n}$ is the Pell sequence and proved that $0,1$ and $169$ are the only perfect powers.

\smallskip
Now one can extend \eqref{eq1} by taking two terms of a binary recurrence sequence and asks the same question. Recently, several authors studied the problem to find $(n, m, z)$ such that
\begin{equation}\label{eq2}
u_{n} + u_{m} = 2^{z},
\end{equation}
where $\{u_{n}\}_{n\geq 0}$ is a fixed recurrence sequence. In particular, Bravo and Luca considered the case when $u_{n}$ is the Fibonacci sequence \cite{BL2015} and the Lucas number sequence \cite{Bravo2014} respectively. Variants of the equation \eqref{eq2},  was studied independently by Bravo et al. \cite{BravoLuca2016} and Marques \cite{Marques2014}, when $u_{n}$ is replaced by the generalized Fibonacci sequence. In \cite{Bravo2015}, Bravo et al., investigated when power of two can be expressed as sum of three Fibonacci numbers. Recently, Pink and Ziegler \cite{Pink2016} generalize the results due to Bravo and Luca \cite{Bravo2014,BL2015} and consider the more general Diophantine equation 
\begin{equation}\label{eq2a}
u_{n} + u_{m} = wp_{1}^{z_{1}} \cdots p_{s}^{z_{s}}
\end{equation}
in non-negative integer unknowns $n, m, z_{1}, \ldots, z_{s}$, where $\{u_{n}\}_{n\geq 0}$ is a binary non-degenerate recurrence sequence, $p_{1}, \ldots, p_{s}$ are distinct primes and $w$ is a non-zero integer with $p_{i} \nmid w$ for $1\leq i \leq s$. They proved, under certain assumptions, \eqref{eq2a} has finitely many solutions using lower bounds for linear forms of $p$-adic logarithms. Recently in \cite{Bertok2017}, a similar type of equation has been studied when the right hand side of \eqref{eq2a} is replaced by linear combinations of prime powers.

\smallskip
The purpose of this paper is twofold. On one hand,  we give a general finiteness result for the solutions of the equation 
\begin{equation}\label{eq3}
u_{n_{1}} + u_{n_{2}} + \cdots + u_{n_{t}} = p^{z}
\end{equation}
in non-negative integer unknowns $n_{1},\ldots, n_{t}, z$, where  $\{u_{n}\}_{n\geq 0}$ is a binary non-degenerate recurrence sequence with $n_{1} \geq n_{2} \geq \cdots \geq n_{t} \geq 0$ and $p$ is a given prime. On the
other hand, we completely solve the equation \eqref{eq3} when $u_{n}$ is a balancing number sequence and $(t, p) = (3, 3)$. To prove our main theorems, we use lower bounds for linear forms in logarithms of algebraic numbers and a version of the Baker-Davenport reduction method.

\section{Notations and Main Results} 
The sequence $\{u_{n}\}_{n \geq 0} = \{u_{n}(P, Q, u_{0}, u_{1})\}$ is called a binary linear recurrence sequence if the relation
\begin{equation}\label{eq4}
u_{n} = Pu_{n-1} + Qu_{n-2} \;\;(n\geq 2)
\end{equation}
holds, where $PQ\neq 0, u_{0}, u_{1}$  are fixed rational integers and $|u_{0}| + |u_{1}| > 0$. Then for $n\geq 0$
\begin{equation}\label{eq5}
u_{n} = \frac{a\alpha^{n}-b\beta^{n}}{\alpha-\beta}\quad (\alpha \neq \beta), 
\end{equation}
where $\alpha$ and $\beta$ are the roots of the polynomial $x^2-Px-Q$ and $a = u_{1} - u_{0}\beta,\, b = u_{1} - u_{0}\alpha$. The
sequence $\{u_{n}\}$ is called non-degenerate, if $ab\al\be \neq 0$ and $\al/\be$ is not a root of unity.  

Throughout the paper, we assume that $u_{n}$ is non-degenerate, $\sqrt{\Delta} = (\alpha-\beta) >0$. The latter assumption implies that the sequence $\{u_{n}\}_{n \geq 0}$ have a dominant root and hence we can assume that $|\al| > |\be|$. With these notations, we have the following theorem.
\begin{theorem}\label{th1}
Let $\{u_{n}\}_{n \geq 0}$ be a non-degenerate binary recurrence sequence with $\Delta > 0$. Then there exists an effectively computable constant $C$ depending on $\{u_{n}\}_{n \geq 0}, p, t$ such that all solutions $(n_{1},\ldots, n_{t},z)$ to equation \eqref{eq3} satisfy 
\[\max\{n_1,\ldots,n_t,z\} < C.\]
\end{theorem}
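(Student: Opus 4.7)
The plan is to apply Baker's theory of linear forms in logarithms --- specifically Matveev's theorem --- in a $t$-step peeling argument. Since $\Delta>0$ the roots $\alpha,\beta$ of $x^{2}-Px-Q$ are real with $\alpha-\beta=\sqrt{\Delta}>0$, and because $\alpha\beta=-Q$ is a nonzero integer together with $|\alpha|>|\beta|$ one has $\alpha>1$. Substituting the Binet formula \eqref{eq5} into \eqref{eq3} gives the master identity
\[
\frac{a}{\alpha-\beta}\bigl(\alpha^{n_{1}}+\cdots+\alpha^{n_{t}}\bigr)-p^{z}=\frac{b}{\alpha-\beta}\bigl(\beta^{n_{1}}+\cdots+\beta^{n_{t}}\bigr),
\]
and a size comparison immediately yields $z\log p = n_{1}\log\alpha+O(1)$, so $z\ll n_{1}$. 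It therefore suffices to bound $n_{1}$.

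First I would isolate the dominant term: dividing by $p^{z}$ and using $p^{z}\asymp \alpha^{n_{1}}$ gives
\[
\left|\frac{a\alpha^{n_{1}}}{(\alpha-\beta)p^{z}}-1\right|\ll \alpha^{n_{2}-n_{1}}+(|\beta|/\alpha)^{n_{1}}.
\]
Matveev's theorem applied to the three-term linear form $\Lambda_{1}=n_{1}\log\alpha-z\log p+\log(a/(\alpha-\beta))$ yields an effective lower bound $|\Lambda_{1}|>\exp(-C_{1}\log n_{1})$ depending only on $\alpha$, $p$ and $a/(\alpha-\beta)$. Combining the two inequalities gives $n_{1}-n_{2}\ll \log n_{1}$.

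The induction then peels off one more term at each step. Assuming $n_{1}-n_{i}\ll (\log n_{1})^{i-1}$ for $i\leq j$, I would form
\[
\Lambda_{j}=n_{1}\log\alpha-z\log p+\log\frac{a\bigl(1+\alpha^{n_{2}-n_{1}}+\cdots+\alpha^{n_{j}-n_{1}}\bigr)}{\alpha-\beta}.
\]
The third algebraic number now has logarithmic height $O((\log n_{1})^{j-1})$, so Matveev gives $|\Lambda_{j}|>\exp(-C_{j}(\log n_{1})^{j})$, while the same rearrangement as before yields $|\Lambda_{j}|\ll \alpha^{n_{j+1}-n_{1}}+(|\beta|/\alpha)^{n_{1}}$; hence $n_{1}-n_{j+1}\ll (\log n_{1})^{j}$. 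At the last step all $\alpha$-contributions have been absorbed, so the upper bound reduces to $|\Lambda_{t}|\ll \exp(-c\,n_{1})$ for an explicit $c>0$ (treating separately the easy cases $|\beta|>1$ and $|\beta|\leq 1$). Comparing this with the Matveev bound $|\Lambda_{t}|>\exp(-C_{t}(\log n_{1})^{t})$ forces $n_{1}\ll (\log n_{1})^{t}$, effectively bounding $n_{1}$ and in turn $z$ and all the other $n_{i}$.

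The main technical obstacle is the height bookkeeping in this iteration: each peeling step enlarges the third algebraic number and multiplies the Matveev exponent by another factor of $\log n_{1}$, so after $t-1$ iterations the lower bound has degraded to $\exp(-C_{t}(\log n_{1})^{t})$. One has to verify that the tail of $\beta$-terms contributes an upper bound which is genuinely exponentially small in $n_{1}$ (not merely polylogarithmic), so that the final comparison closes; this is exactly where the non-degeneracy assumption $|\alpha|>|\beta|$ is essential. Once $n_{1}$ is bounded, every other quantity in \eqref{eq3} is effectively bounded, yielding the constant $C$ asserted in the theorem.
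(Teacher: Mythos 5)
Your proposal follows essentially the same route as the paper: bound $z$ linearly in $n_{1}$, then apply Matveev's theorem $t$ times in a peeling argument, at step $j$ isolating the partial sum $\frac{a}{\sqrt{\Delta}}(\alpha^{n_{1}}+\cdots+\alpha^{n_{j}})$ against $p^{z}$ so that the height of the third algebraic number is controlled by the already-established bounds on $n_{1}-n_{2},\ldots,n_{1}-n_{j}$, culminating in $n_{1}\ll(\log n_{1})^{t}$ (which the paper converts into an explicit bound via the Peth\H{o}--de Weger lemma, Lemma \ref{lem9}).

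The one substantive omission is that you never verify the hypothesis $\Lambda_{j}\neq 0$ before invoking Matveev's theorem, and this is not automatic here: $\Lambda_{j}=0$ is the genuine Diophantine condition $p^{z}\sqrt{\Delta}=a(\alpha^{n_{1}}+\cdots+\alpha^{n_{j}})$, which could conceivably hold. The paper handles this in Lemma \ref{lem13a} by showing that $\Lambda_{j}=0$ forces $a(\alpha^{n_{j+1}}+\cdots+\alpha^{n_{t}})=b(\beta^{n_{1}}+\cdots+\beta^{n_{t}})$, and since $|\alpha|>|\beta|$ this pins $n_{1}$ below an explicit constant, so those cases can be discarded at the outset. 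You should add such an argument (or an equivalent one); without it each application of the lower bound is unjustified. Two smaller points worth a sentence each: the reduction to strictly decreasing indices $n_{1}>n_{2}>\cdots>n_{t}$ (when some $n_{i}$ coincide one groups them, changing only constants), and the degenerate endpoint cases such as $n_{1}=\cdots=n_{t}$ or $n_{t}=0$, which the paper disposes of separately before the main iteration.
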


\smallskip 
{\it Balancing numbers} $n$ are solutions of the Diophantine equation 
\begin{equation*}
\sum_{i=1}^{n-1}i=\sum_{j=n+1}^{m}j,
\end{equation*} 
for some natural number $m$ \cite{Behera1999}. Let us denote the $n$-th balancing number by $B_{n}$. Also, the balancing numbers satisfy the recurrence relation $B_{n+1}=6B_{n}-B_{n-1}$ with initial conditions $B_0 = 0, B_1 = 1$ for $n\geq 1$. Therefore, 
the sequence \eqref{eq4} for $(P, Q, u_{0}, u_{1}) = (6, -1, 0, 1)$
is a balancing sequence. For more details regarding balancing numbers one can refer \cite{Behera1999, Rout2015}. We prove the following theorem as an example for the explicit computations of constants.
\begin{theorem}\label{th2}
The only solutions of the Diophantine equation 
\begin{equation}\label{eq7}
B_{n_{1}} + B_{n_{2}} + B_{n_{3}} = 3^{z}
\end{equation}
in integers $n_{1}, n_{2}, n_{3}, z$ with $n_{1}\geq n_{2} \geq n_{3} \geq 0$ are   $(n_{1}, n_{2}, n_{3}, z) \in \{(1, 1, 1, 1),(1,0,0,0)\}.$
\end{theorem}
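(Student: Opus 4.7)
\textbf{Proof plan for Theorem~\ref{th2}.}
The plan is to follow the by-now-standard two-phase template: first use Matveev's lower bound for linear forms in logarithms combined with the Binet formula to produce an astronomical upper bound on $n_1$, then apply the Dujella--Peth\H{o} variant of Baker--Davenport reduction to shrink this bound to a few hundred, and finally check the remaining range by direct computation. Set $\al = 3+2\sqrt{2}$ and $\be = 3-2\sqrt{2}$, the roots of $x^2-6x+1$; then $\al\be = 1$, $|\be|<1$, and the Binet formula reads $B_n = (\al^n-\be^n)/(4\sqrt{2})$.

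Standard size estimates (such as $\al^{n-1}\le B_n \le \al^n$) combined with \eqref{eq7} sandwich $3^z$ between two powers of $\al^{n_1}$, so $z$ is bounded by an explicit linear function of $n_1$, and it suffices to bound $n_1$. I would do this by iterating the linear-forms-in-logarithms machinery three times. \emph{Step~1}: using Binet, rewrite \eqref{eq7} as $\al^{n_1}(4\sqrt{2})^{-1} - 3^z = R_1$ with $|R_1| \ll \al^{n_2}$; dividing by $3^z$ and applying Matveev to $\Lambda_1 = n_1\log\al - z\log 3 - \log(4\sqrt{2})$ yields an explicit bound $n_1 - n_2 < K_1$. \emph{Step~2}: absorb $\al^{n_2}(4\sqrt{2})^{-1}$ into the main side so $|R_2|\ll\al^{n_3}$, producing a new linear form $\Lambda_2$ whose third algebraic coefficient is $(1+\al^{n_2-n_1})/(4\sqrt{2})$; since $n_1-n_2$ is now bounded, this coefficient ranges over a finite explicit set, and Matveev yields $n_1 - n_3 < K_2$. \emph{Step~3}: absorb $\al^{n_3}(4\sqrt{2})^{-1}$ as well, so that the new remainder is dominated by $\al^{-n_1}$; the resulting $\Lambda_3$ has algebraic coefficient $(1+\al^{n_2-n_1}+\al^{n_3-n_1})/(4\sqrt{2})$, and one last application of Matveev gives an absolute bound $n_1 < K_3$.

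The reduction phase then applies the Dujella--Peth\H{o} lemma to each of the three inequalities $|\Lambda_i|<c_i\,\al^{-s_i}$ in turn, using the continued-fraction convergents of $\log\al/\log 3$; after the third round one expects $n_1$ in the low hundreds, at which point a finite machine check over all admissible $(n_1,n_2,n_3)$ in this range completes the proof, leaving only $(1,1,1,1)$ and $(1,0,0,0)$. The main obstacle is not conceptual but organizational: in Steps~2 and~3 one must rerun Matveev and the reduction for each value of the bounded differences $n_1-n_2$ and $n_1-n_3$ separately, computing absolute logarithmic heights of the absorbed algebraic numbers $(1+\al^{n_2-n_1})/(4\sqrt{2})$ and $(1+\al^{n_2-n_1}+\al^{n_3-n_1})/(4\sqrt{2})$ case by case, while also verifying that none of the linear forms $\Lambda_i$ vanishes identically (the degenerate sub-cases reduce to a small finite list already caught by the final search).
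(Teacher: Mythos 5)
Your plan matches the paper's proof essentially step for step: the same Binet-formula decomposition, three successive applications of Matveev's theorem to bound $n_1-n_2$, $n_1-n_3$, and finally $n_1$ (the paper obtains $n_1<1.5\times 10^{45}$), the same Dujella--Peth\H{o} reduction run over all admissible values of the bounded differences, and a finite computer search (the paper checks $n_1\le 100$ up front and lets the reduction force $n_1\le 75$, a contradiction). The points you flag as organizational -- nonvanishing of the $\Lambda_i$ and the heights of the absorbed coefficients $(1+\al^{n_2-n_1}+\cdots)/(4\sqrt 2)$ -- are exactly what the paper's Lemmas \ref{lem13a} and \ref{lem13b} handle.
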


\section{Auxiliary results}
In the following lemma we find an upper bound for $|u_{n}|$ which will be useful in the proof of Theorem \ref{th1} and also we find a relation between $n_1$ and $z$.
\begin{lemma}\label{lem8a}
There exists constants $d_0$ and $d_1$ such that following holds.
\begin{enumerate}
\item $|u_{n}|\leq d_{0}|\al|^{n}$.
\item If \eqref{eq3} holds, then $z\leq d_1 n_1$. 
\end{enumerate}
\end{lemma}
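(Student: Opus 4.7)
The plan is to derive both bounds directly from the closed form \eqref{eq5} together with the dominant-root hypothesis $|\alpha|>|\beta|$; no deeper machinery is required for this lemma.

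For part (1), I would apply the triangle inequality to the Binet-type formula \eqref{eq5}. Since $|\beta|\leq|\alpha|$, one obtains
\[
|u_n|\;\leq\;\frac{|a|\,|\alpha|^n+|b|\,|\beta|^n}{\alpha-\beta}\;\leq\;\frac{|a|+|b|}{\alpha-\beta}\,|\alpha|^n,
\]
so the choice $d_0:=(|a|+|b|)/(\alpha-\beta)$ suffices; this constant depends only on the sequence $\{u_n\}$.

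For part (2), I would feed the estimate from (1) into \eqref{eq3}. Since $p^z>0$ and $n_i\leq n_1$ for each $i$, the triangle inequality combined with (1) yields
\[
p^z\;=\;|u_{n_1}+\cdots+u_{n_t}|\;\leq\;\sum_{i=1}^{t}|u_{n_i}|\;\leq\;t\,d_0\,|\alpha|^{n_1}.
\]
Note that $|\alpha|>1$, because $|\alpha|\,|\beta|=|Q|\geq 1$ combined with $|\alpha|>|\beta|$ forces $|\alpha|^2>1$. Taking logarithms and dividing by $\log p>0$ gives
\[
z\;\leq\;\frac{\log(t\,d_0)}{\log p}+\frac{\log|\alpha|}{\log p}\,n_1,
\]
and for $n_1\geq 1$ the right-hand side is at most $d_1 n_1$ for an effectively computable constant $d_1$ depending on $t$, $p$, and $\{u_n\}$. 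The edge case $n_1=0$ collapses \eqref{eq3} to $p^z=t\,u_0$, which admits at most one solution, and can be absorbed by enlarging $d_1$.

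No step presents a genuine obstacle; this lemma is the elementary starting point from which the sharper linear-forms-in-logarithms analysis of Theorem~\ref{th1} will proceed, with $d_0$ and $d_1$ serving to convert a bound on $z$ into a bound on $n_1$ (and conversely) in later estimates.
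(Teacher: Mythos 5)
Your proposal is correct and follows essentially the same route as the paper: part (1) is the identical triangle-inequality bound on the Binet form with $d_0=(|a|+|b|)/\sqrt{\Delta}$, and part (2) likewise plugs that bound into \eqref{eq3} and compares exponents (the paper packages this as $d_0|\alpha|^n<p^{nd_1}$ and sums, while you take logarithms directly, a purely cosmetic difference). Your explicit remarks that $|\alpha|>1$ and that $n_1=0$ is a degenerate case are harmless additions the paper leaves implicit.
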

\begin{proof}
From \eqref{eq5}, we have 
\begin{align*}
|u_{n}| & \leq \frac{|\al|^{n}}{\sqrt{\Delta}}\left(|a| + |b| |\frac{\be^{n}}{\al^{n}}|\right).
\end{align*}
Since $|\al| > |\be|$, the above inequality becomes 
\begin{equation}\label{eq8}
|u_{n}|\leq |\al|^{n} \frac{|a| + |b|}{\sqrt{\Delta}} = d_{0}|\al|^{n},
\end{equation}
where $d_{0} := (|a| + |b|)/\sqrt{\Delta}$. This proves (1). 

\smallskip

From \eqref{eq3} and \eqref{eq8}
\[p^z < d_0(|\alpha|^{n_1} + \cdots + |\alpha|^{n_t}).\]
As $p$ is given, we always find a constant $d_1$ such that $d_{0}|\al|^{n} < p^{nd_1}$. Thus, the above inequality becomes 
\begin{align*}
p^z &< \left(p^{n_1d_1} + \cdots + p^{n_{t}d_{1}}\right)\\
& <p^{n_1d_1}\left(\frac{p}{p-1}\right) \leq p^{d_{1}n_{1}+1}. 
\end{align*}
Therefore, we get $z \leq d_{1}n_{1}$.
\end{proof}

The following lemma is due to Peth\H{o} and de Weger \cite{Petho1986}. 
\begin{lemma}[\cite{Petho1986}]\label{lem9}
Let $u, v \geq 0, h\geq 1$ and $x\in \mathbb{R}$ be the largest solution of $x = u + v (\log x)^{h}$. Then
\[x < \max\{2^{h}(u^{1/h} + v^{1/h} \log(h^{h}v))^{h}, 2^{h}(u^{1/h} + 2e^{2})^{h}\}.\]
\end{lemma}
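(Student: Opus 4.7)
The plan is to reduce the transcendental equation to a linear inequality via the substitution $y = x^{1/h}$, and then bound $y$ by exploiting the concavity of $\log$ through a tangent-line estimate. Since $h \geq 1$, the map $t \mapsto t^{1/h}$ is subadditive on $[0,\infty)$, so starting from $x = u + v(\log x)^h$ and extracting $h$-th roots gives
\[
y = x^{1/h} = \bigl(u + v(h\log y)^h\bigr)^{1/h} \leq u^{1/h} + v^{1/h}\cdot h\log y = A + C \log y,
\]
where I set $A := u^{1/h}$ and $C := h v^{1/h}$. Thus it suffices to bound the largest $y$ satisfying the scalar inequality $y \leq A + C\log y$.

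Next, I would invoke the tangent-line bound for $\log$: by concavity, $\log y \leq y/t - 1 + \log t$ for every $t > 0$. Choosing $t = 2C$ (valid whenever $C > 0$) and substituting into $y \leq A + C \log y$, the coefficient of $y$ on the right becomes $1/2$, so solving the resulting linear inequality yields
\[
y \;\leq\; 2A - 2C + 2C\log(2C) \;=\; 2A + 2C\log(2C/e) \;\leq\; 2A + 2C\log C,
\]
where the last step uses $2/e < 1$. Raising to the $h$-th power and using the identity
\[
A + C\log C \;=\; u^{1/h} + v^{1/h}\bigl(h\log h + \log v\bigr) \;=\; u^{1/h} + v^{1/h}\log(h^h v),
\]
produces the first candidate bound $x \leq 2^h\bigl(u^{1/h} + v^{1/h}\log(h^h v)\bigr)^h$.

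The above argument degrades when $C$ is so small that $\log(h^h v)$ becomes negative, in which case the first bound can fail to be even a valid upper estimate. To cover that regime, I would repeat the tangent-line trick at a different base point, $t = e^2$, which gives $\log y \leq y/e^2 + 1$. Plugging this into $y \leq A + C\log y$ yields the linear inequality $(1 - C/e^2)y \leq A + C$, and under the standing restriction $C \leq e^2/2$ (which is automatic in the regime where the first bound is useless) one obtains $y \leq 2(A+C) \leq 2A + 4e^2$, hence $x \leq 2^h(u^{1/h} + 2e^2)^h$. Combining the two regimes, $y$ is always majorised by one of the two quantities on the right of the stated maximum.

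The main obstacle is purely bookkeeping: one must pick the two tangent points so that the two resulting linear bounds together exhaust all $C \geq 0$, and verify that the slack absorbed into the factor $2$ (respectively into the constant $2e^2$) is enough to make the inequalities strict. This is the kind of elementary but delicate constant-chasing typical of the Pethő--de Weger style.
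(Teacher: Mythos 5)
The paper does not actually prove Lemma \ref{lem9}: it is quoted from Peth\H{o} and de Weger \cite{Petho1986} and used as a black box, so there is no in-paper argument to measure yours against. Judged on its own, your proof is essentially correct and self-contained. The substitution $y=x^{1/h}$, the subadditivity of $t\mapsto t^{1/h}$ for $h\ge 1$, the tangent-line bound $\log y\le y/t-1+\log t$ taken at $t=2C$, and the identity $C\log C=v^{1/h}\log(h^hv)$ for $C=hv^{1/h}$ are all sound, and the resulting chain $y\le 2A-2C+2C\log(2C)<2(A+C\log C)$ is valid for every $C>0$ and every solution $y\ge 1$. Three small points should be folded in. First, the step $\bigl(u+vh^h(\log y)^h\bigr)^{1/h}\le u^{1/h}+hv^{1/h}\log y$ requires $\log y\ge 0$, i.e.\ $x\ge 1$; for $x<1$ the asserted bound is trivial because $2^h(u^{1/h}+2e^2)^h>1$, and the case $v=0$ (hence $C=0$) is likewise trivial, but both should be dispatched explicitly. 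Second, your worry that the first bound ``can fail to be even a valid upper estimate'' when $\log(h^hv)<0$ does not materialize: since any solution has $y>0$, the derived inequality $y<2(A+C\log C)$ forces $A+C\log C>0$, so raising to the $h$-th power is legitimate and the first term of the maximum already majorizes every solution $x\ge 1$ with $v>0$. Your two ``regimes'' therefore overlap and automatically exhaust all cases --- the coverage issue you flag as the main obstacle dissolves on inspection, and the second term of the maximum is needed only for the degenerate situations above. Third, the strictness of the final inequality comes for free from $2C\log(2/e)<0$. With these remarks added, the argument is a complete and arguably slightly stronger proof than the statement requires.
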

Let $\eta$ be an algebraic number of degree $d$ with minimal polynomial 
\[a_{0}x^d + a_1x^d + \cdots + a_d = a_0 \prod_{i=1}^{d}\left(X - \eta^{(i)}\right)\]
where the $a_i$'s are relatively prime integers with $a_0 > 0$ and the
$\eta^{(i)}$'s are conjugates of $\eta$. We have defined the absolute {\it logarithmic height} of an algebraic number $\eta$ by
\[
h(\eta) = \frac{1}{d} \left( \log |a_0| + \sum_{i=1}^d \log \max ( 1, |\eta^{(i)}| ) \right). 
\]
In particular, if $\eta = p/q$ is a rational number with $\gcd(p, q) = 1$ and $q >0$, then $h(\eta) = \log \max\{|p|, |q|\}$. Here are some important properties of logarithmic height which we use in our further investigation. 
\begin{enumerate}
\item $h(\eta \pm \gamma) \leq h(\eta) + h(\gamma) + \log 2$,
\item $h(\eta\gamma^{\pm 1}) \leq h(\eta) + h(\gamma)$,
\item $h(\eta^{t}) \leq |t|h(\eta),$\; for $t \in \mathbb{Z}$.
\end{enumerate}

To prove our theorem, we use lower bounds for linear forms in logarithms to bound the index $n_1$ appearing in \eqref{eq3}. 
We need the following general lower bound for linear forms in logarithms due to Matveev \cite{Matveev2000} (see also \cite[Theorem 9.4]{Bugeaud2006}).
\begin{lemma}[Matveev \cite{Matveev2000}]\label{lem12}
Let $\ga_1,\ldots,\ga_t$ be real algebraic numbers and let $b_{1},\ldots, b_{t}$ be non-zero rational integers. Let $D$ be the degree of the number field $\mathbb{Q}(\ga_1,\ldots,\ga_t)$ over $\mathbb{Q}$ and let $A_{j}$ be real number satisfying 
\begin{equation}\label{eq8a}
A_j \geq \max \left\{ Dh(\ga_j) , |\log \ga_j|, 0.16  \right\}, \quad j= 1, \ldots,t.
\end{equation}
 
Assume that $B\geq \max\{|b_1|, \ldots, |b_{t}|\}$ and $\Lambda:=\ga_{1}^{b_1}\cdots\ga_{t}^{b_t} - 1$. If $\Lambda \neq 0$, then
\[|\Lambda| \geq \exp \left( -1.4\times 30^{t+3}\times t^{4.5}\times D^{2}(1 + \log D)(1 + \log B)A_{1}\cdots A_{t}\right).\]
\end{lemma}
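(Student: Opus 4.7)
The plan is to follow the standard three-step linear-forms-in-logarithms attack adapted to the balancing numbers, and then use a Baker--Davenport / Dujella--Peth\H{o} reduction to bring the astronomical Matveev bound down to a range in which a direct search finishes the job.

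For the set-up, I would first specialize to $(P,Q,u_0,u_1)=(6,-1,0,1)$, so that $\alpha=3+2\sqrt 2$, $\beta=3-2\sqrt 2$, $\alpha\beta=1$, $\sqrt{\Delta}=4\sqrt 2$, and $B_n=(\alpha^n-\beta^n)/(4\sqrt 2)$. Note $|\beta|=1/\alpha<1$. Lemma~\ref{lem8a}(2) with $p=3$ gives a concrete inequality $z\le d_1n_1$ (for balancing numbers one checks that $d_1$ can be taken close to $\log\alpha/\log 3$). I would dispose of the range $n_1\le$ some small bound (say $n_1\le 300$) by a direct computer search, so that in what follows $n_1$ is large enough to justify the estimates; in particular the inequality $3^z\ge\alpha^{n_1}/(8\sqrt 2)$ holds.

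Next I would produce three successively sharper linear forms. Define
\[
\Lambda_1 \;=\; \frac{\alpha^{n_1}}{4\sqrt 2}\,3^{-z} - 1.
\]
Substituting Binet on the left of \eqref{eq7}, $|\Lambda_1|\ll \alpha^{n_2-n_1}$; applying Lemma~\ref{lem12} to $\Lambda_1$ with $(\gamma_1,\gamma_2,\gamma_3)=(\alpha,3,4\sqrt 2)$ and exponents $(n_1,-z,-1)$, and using $B:=n_1$ together with $h(\alpha)=\tfrac12\log\alpha$, $h(3)=\log3$, $h(4\sqrt 2)=\tfrac12\log 32$, yields a lower bound that, combined with the upper bound, forces
\[
n_1-n_2 \;<\; C_1\log n_1
\]
for an explicit $C_1$. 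Second, set
\[
\Lambda_2 \;=\; \frac{\alpha^{n_2}(\alpha^{n_1-n_2}+1)}{4\sqrt 2}\,3^{-z}-1,
\]
so that $|\Lambda_2|\ll\alpha^{n_3-n_2}$. Applying Matveev now with $\gamma_3=(\alpha^{n_1-n_2}+1)/(4\sqrt 2)$ and using the height properties listed after Lemma~\ref{lem9}, the height of $\gamma_3$ is bounded linearly in $n_1-n_2$, hence (by the previous step) polylogarithmically in $n_1$. This yields $n_2-n_3<C_2(\log n_1)^2$. Finally,
\[
\Lambda_3 \;=\; \frac{\alpha^{n_1}+\alpha^{n_2}+\alpha^{n_3}}{4\sqrt 2}\,3^{-z}-1
\]
satisfies $|\Lambda_3|\ll\alpha^{-n_1}$ because only the three $\beta^{n_i}$ terms remain, and a third application of Matveev (with $\gamma_3=(\alpha^{n_1-n_3}+\alpha^{n_2-n_3}+1)/(4\sqrt 2)$, whose height is again polylogarithmic in $n_1$) produces an inequality of the shape $n_1<u+v(\log n_1)^3$, to which Lemma~\ref{lem9} applies and yields an absolute, effectively computable upper bound $N_0$ on $n_1$. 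At each stage I must verify $\Lambda_i\ne 0$; for $\Lambda_1$, equality would force $\alpha^{n_1}/(4\sqrt 2)\in\Q$, which fails because $\alpha$ is irrational of degree $2$, and similar Galois-conjugate arguments handle $\Lambda_2,\Lambda_3$.

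The remaining step is the reduction. For each fixed pair $(n_1-n_2, n_2-n_3)$ in the range surviving the two bounds above, I would apply the Dujella--Peth\H{o} lemma to the inequalities $|n_1\log\alpha-z\log 3-\log c_i|<C\alpha^{-m}$ with $c_1=4\sqrt 2$, $c_2=4\sqrt 2/(\alpha^{n_1-n_2}+1)$, $c_3=4\sqrt 2/(\alpha^{n_1-n_3}+\alpha^{n_2-n_3}+1)$, using a good rational approximation of $\log\alpha/\log 3$ from its continued fraction expansion to reduce the bound $N_0$ to something like $n_1\le 200$ or so. Then a finite direct check over $n_1\le n_1^{\text{red}}$ identifies exactly the two solutions $(1,1,1,1)$ and $(1,0,0,0)$.

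The main obstacle will be the second and third Matveev applications: the parameters $\gamma_3$ there depend on $n_1,n_2,n_3$, so their heights must be estimated in terms of $n_1$ itself, and care is needed to keep the resulting transcendental bound self-consistent so that Lemma~\ref{lem9} actually yields a closed-form upper bound. The Dujella--Peth\H{o} reduction, while technically routine, also has to be run uniformly over the (potentially many) residual pairs $(n_1-n_2,n_2-n_3)$, and choosing a convergent of sufficiently high denominator to make the inequality $\|\!|q\gamma\|\!|>2M$ succeed at each pair is the delicate practical point.
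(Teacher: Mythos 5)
Your proposal does not prove the statement it is attached to. The statement is Lemma~\ref{lem12}, i.e.\ Matveev's lower bound for $|\gamma_1^{b_1}\cdots\gamma_t^{b_t}-1|$, which is a deep theorem in transcendence theory; the paper does not prove it but quotes it directly from Matveev's work \cite{Matveev2000}. What you have written is instead a (reasonable) outline of the proof of Theorem~\ref{th2} --- the resolution of $B_{n_1}+B_{n_2}+B_{n_3}=3^z$ --- and that outline \emph{uses} Lemma~\ref{lem12} as a black box in three places. An argument that invokes the statement to be proved as one of its main tools cannot serve as a proof of that statement.

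To actually prove Lemma~\ref{lem12} you would need the full machinery of Baker's method in the quantitative form developed by Matveev: construction of an auxiliary function or interpolation determinant built from the $\gamma_j$ and $b_j$, a multiplicity/zero estimate on the relevant algebraic group, an extrapolation step, and a careful bookkeeping of the constants to arrive at the explicit factor $1.4\times 30^{t+3}\, t^{4.5} D^2(1+\log D)(1+\log B)A_1\cdots A_t$. None of this appears in your proposal, and it is well outside what the paper itself attempts; the correct treatment here is simply to cite the result. (As a secondary remark: even read as a proof of Theorem~\ref{th2}, your sketch diverges from the paper in bounding $n_2-n_3$ rather than $n_1-n_3$ in the second step, and in normalizing the linear forms by $\alpha^{n_2}$ and $\alpha^{n_3}$ rather than $\alpha^{n_1}$; these are workable variants but they are not the statement under review.)
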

To reduce the upper bound which is generally too large, we need a variant of the Baker-Davenport Lemma, which is due to Dujella and Peth\H{o} \cite{Dujella1998}. Here, for a real number $x$, let $||x|| := \min \{|x - n| : n \in \mathbb{Z}\}$ denote the distance from $x$ to the nearest integer.
\begin{lemma}[\cite{Dujella1998}]\label{lem13}
Suppose that $M$ is a positive integer, and $A, B$ are positive reals with $B > 1$. Let $p/q$ be the convergent of the continued fraction expansion of the irrational number $\ga$ such that $q > 6M$, and let  $\epsilon := ||\mu q|| - M||\ga q||$, where $\mu$ is a real number. If  $\epsilon > 0$, then there is no solution of the inequality
\[0 < u \ga - n + \mu < AB^{-m}\]
in positive integers $u, m$ and $n$ with
\[u \leq M\quad \mbox{and}\quad m\geq \frac{\log (Aq/\epsilon)}{\log B}.\]
\end{lemma}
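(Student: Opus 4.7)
The plan is to argue by contradiction: assume some triple $(u,m,n)$ of positive integers does satisfy both $0 < u\ga - n + \mu < AB^{-m}$ and the constraints $u \leq M$ and $m \geq \log(Aq/\epsilon)/\log B$. The key idea is to multiply through by $q$ and then bound the distance $||q(u\ga - n + \mu)||$ to the nearest integer in two mutually incompatible ways, using the hypothesis $\epsilon > 0$ to make the clash genuine.

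First I would rewrite the hypothesis $m \geq \log(Aq/\epsilon)/\log B$ as $qAB^{-m} \leq \epsilon$. Multiplying the assumed inequality by $q$ then yields
\[0 < q(u\ga - n + \mu) < qAB^{-m} \leq \epsilon.\]
Because $\epsilon = ||\mu q|| - M||\ga q|| \leq ||\mu q|| \leq 1/2$, the positive real $q(u\ga - n + \mu)$ lies in $(0,1/2)$ and hence equals its own distance to the nearest integer, giving the upper estimate
\[||q(u\ga - n + \mu)|| < \epsilon.\]

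For the matching lower bound I would use that $qn \in \mathbb{Z}$, so $||q(u\ga - n + \mu)|| = ||uq\ga + q\mu||$, and apply the reverse triangle inequality for the distance-to-nearest-integer function:
\[||uq\ga + q\mu|| \geq ||q\mu|| - ||uq\ga||.\]
Since $p/q$ is a convergent, the integer $up$ lies within distance $u|q\ga - p| = u||q\ga||$ of $uq\ga$, so $||uq\ga|| \leq u||q\ga|| \leq M||q\ga||$. Substituting gives
\[||q(u\ga - n + \mu)|| \geq ||\mu q|| - M||q\ga|| = \epsilon,\]
which, compared with the earlier strict upper bound, produces the contradiction $\epsilon > \epsilon$.

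The whole argument is elementary bookkeeping with the triangle and reverse triangle inequalities in $\mathbb{R}/\mathbb{Z}$, so no deep ingredient is required. The one point to be vigilant about is the strictness of the inequality $u\ga - n + \mu < AB^{-m}$ in the hypothesis, which is exactly what forces the final conclusion $\epsilon > \epsilon$ rather than the non-contradictory $\epsilon \geq \epsilon$. I would also remark that the hypothesis $q > 6M$ is not explicitly invoked in the chain above; it plays the practical role of ensuring the convergent denominator is large enough that the conclusion $m < \log(Aq/\epsilon)/\log B$ represents a genuine improvement on the bound for $m$ coming from linear forms in logarithms.
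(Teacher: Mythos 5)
Your argument is correct and is essentially the standard proof of this reduction lemma: the paper itself offers no proof, quoting the result from Dujella and Peth\H{o} \cite{Dujella1998}, and your chain (multiply by $q$, bound $\|q(u\gamma-n+\mu)\|$ from above by $qAB^{-m}\leq\epsilon$ and from below by $\|\mu q\|-M\|\gamma q\|=\epsilon$ via subadditivity of $\|\cdot\|$) is exactly the argument in that reference. Your closing remark is also accurate: $q>6M$ is not needed for the implication itself but ensures $M\|\gamma q\|<1/6$, so that a convergent with $\epsilon>0$ can realistically be found.
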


To proof Theorem \ref{th1}, we apply linear forms in logarithms $t$ times. Every time we find an upper bound for $(n_1 - n_i)$ in terms of $n_1$ for all $2\leq i \leq t$. In order to apply the linear forms in logarithms to bound  $(n_1 - n_i)$ for a fixed $i$ we require upper bounds of $(n_1 - n_j)$ for all $1\leq j < i$. Finally, using these upper bounds for $(n_1 - n_i),\;2\leq i \leq t$, we get an upper bound for $n_1$. In order to apply Matveev's Theorem we must ensure that $\Lambda$ does not vanish. In this regard, we have the following lemma.
\begin{lemma}\label{lem13a}
Suppose $\Lambda_i := p^{z}\alpha^{-n_1} a^{-1}\sqrt{\Delta}(1 + \al^{n_2 - n_1} + \cdots + \al^{n_i - n_1})^{-1} - 1$ for all $1\leq i \leq t$. If  $\Lambda_i = 0$, then $n_1 \leq \frac{\log ((|a|(t - i)+(t - 1)|b|)/|b|)}{
\log (|\beta|/|\alpha|)}$.
\end{lemma}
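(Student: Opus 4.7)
The plan is to convert the hypothesis $\Lambda_i=0$ into an exact algebraic identity over $\mathbb{Q}(\sqrt{\Delta})$, and then to compare the sizes of its two sides using the dominance $|\alpha|>|\beta|$.

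First, I unpack $\Lambda_i=0$ by multiplying through by $a\alpha^{n_1}(1+\alpha^{n_2-n_1}+\cdots+\alpha^{n_i-n_1})$, obtaining
\[
p^{z}\sqrt{\Delta}=a\bigl(\alpha^{n_1}+\alpha^{n_2}+\cdots+\alpha^{n_i}\bigr).
\]
On the other hand, substituting \eqref{eq5} into \eqref{eq3} produces the Binet-type identity
\[
p^{z}\sqrt{\Delta}=a\sum_{j=1}^{t}\alpha^{n_j}-b\sum_{j=1}^{t}\beta^{n_j}.
\]
Subtracting these two expressions for $p^{z}\sqrt{\Delta}$ yields the central relation
\[
a\sum_{j=i+1}^{t}\alpha^{n_j}=b\sum_{j=1}^{t}\beta^{n_j}.
\]

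Next, I isolate the dominant $\beta$-term on the right,
\[
b\beta^{n_1}=a\sum_{j=i+1}^{t}\alpha^{n_j}-b\sum_{j=2}^{t}\beta^{n_j},
\]
and take absolute values. Using $|\alpha|>1$ (which follows from $|\alpha\beta|=|Q|\geq 1$ together with $|\alpha|>|\beta|$) and the fact that $0\leq n_j\leq n_1$, one has both $|\alpha|^{n_j}\leq|\alpha|^{n_1}$ and $|\beta|^{n_j}\leq|\alpha|^{n_1}$ (the latter after splitting on whether $|\beta|<1$ or $|\beta|\geq 1$). The triangle inequality then gives
\[
|b|\,|\beta|^{n_1}\leq\bigl(|a|(t-i)+(t-1)|b|\bigr)|\alpha|^{n_1}.
\]
Dividing by $|b|\,|\alpha|^{n_1}$, taking logarithms, and dividing by $\log(|\beta|/|\alpha|)$ while accounting for its sign delivers the asserted bound on $n_1$.

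The main technical point I expect to navigate is in the last step: the uniform estimate $|\beta|^{n_j}\leq|\alpha|^{n_1}$ requires $|\alpha|^{n_1}\geq 1$ and a small case split on the size of $|\beta|$, and one must track the inequality direction carefully because $\log(|\beta|/|\alpha|)$ is negative. A second minor point is the boundary case $i=t$, in which the $\alpha$-sum on the left is empty and the central relation degenerates to $b\sum_{j=1}^{t}\beta^{n_j}=0$; this must be handled separately, but it is equally restrictive. In practice the numerical value of the bound on $n_1$ is typically non-positive, and the real role of the lemma in the proof of Theorem~\ref{th1} is to certify $\Lambda_i\neq 0$ for all admissible $n_1\geq 0$, so that Matveev's lower bound (Lemma~\ref{lem12}) can subsequently be applied.
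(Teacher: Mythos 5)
Your proposal follows essentially the same route as the paper's own proof: the same identity $p^{z}\sqrt{\Delta}=a(\alpha^{n_1}+\cdots+\alpha^{n_i})$, the same central relation $a\sum_{j=i+1}^{t}\alpha^{n_j}=b\sum_{j=1}^{t}\beta^{n_j}$, and the same triangle-inequality estimate culminating in $|b|\,|\beta|^{n_1}\leq(|a|(t-i)+(t-1)|b|)\,|\alpha|^{n_1}$. The only caveat is the sign issue you yourself flag: dividing by the negative quantity $\log(|\beta|/|\alpha|)$ reverses the inequality, so the derivation literally yields $n_1\geq$ rather than $n_1\leq$ the stated expression --- a defect of the lemma's statement shared by the paper's proof, and harmless in practice since the lemma is only invoked to guarantee $\Lambda_i\neq 0$ once $n_1$ exceeds the threshold $\ell$.
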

\begin{proof}
 $\Lambda_i = 0$  imply 
\begin{equation}\label{eq17}
p^z \sqrt{\Delta} =a \al^{n_1}(1 + \al^{n_2 - n_1} + \cdots + \al^{n_i - n_1}). 
\end{equation}
From \eqref{eq3} and \eqref{eq17}, we have 
\[a(\alpha^{n_1}+\cdots+\alpha^{n_t}) - 
b(\beta^{n_1}+\cdots+\beta^{n_t})= a\alpha^{n_1}+ \cdots+
a\alpha^{n_i}\]
and this implies that $a(\alpha^{n_{i+1}}+\cdots+\alpha^{n_t})
= b(\beta^{n_1}+\cdots+\beta^{n_t})$. But $|a\alpha^{n_1}(t-i)| \geq |a(\alpha^{n_{i+1}}+\cdots+\alpha^{n_t})|$, thus
\[|a\alpha^{n_1}(t-i)| \geq |b(\beta^{n_1}+\cdots+\beta^{n_t})| \geq |b\beta^{n_1}|
- |b(\beta^{n_2}+\cdots+\beta^{n_t})|.\]
As $|\alpha| > |\beta|$, we deduce that 
\[|b\beta^{n_1}| \leq |a\alpha^{n_1}(t-i)| + |b \alpha^{n_1}
|(t-1) \leq |\alpha^{n_1}|(|a|(t-i)+|b|(t-1)).\]
Hence 
\[n_1 \leq \frac{\log ((|a|(t - i)+(t - 1)|b|)/|b|)}{
\log (|\beta|/|\alpha|)}.\]
\end{proof}
The following lemma gives a relation between height of an algebraic number and its logarithm.
 
\begin{lemma}\label{lem13b}
 Let $\gamma_3(i) := a^{-1}\sqrt{\Delta}(1 + \al^{n_2 - n_1} + \cdots + \al^{n_i - n_1})^{-1}$ are the algebraic numbers in the number field $\Q(\sqrt{\Delta})$ for $1 \leq i \leq t$. Then there exists a constant $d_2$ depending on $\al$ and $\beta$ such that
\begin{align*}\label{eq19}
A_3(i): = &2\left(\log|a| + d_2 \log  \max \{\sqrt{\Delta}, 1/\sqrt{\Delta}\}
+ |\log \sqrt{\Delta}| + \left(|n_2 - n_1| + \cdots + |n_i - n_1| \right) h(\al)\right) \nonumber \\
& + (i + 1)\log 4 \geq \max \left\{ Dh(\ga_3(i) , |\log \ga_3(i)|, 0.16  \right\}.
\end{align*}

\end{lemma}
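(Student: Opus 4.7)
The goal is to verify three inequalities: $A_3(i) \geq 2h(\ga_3(i))$, $A_3(i) \geq |\log\ga_3(i)|$, and $A_3(i) \geq 0.16$. Since $\ga_3(i) \in \Q(\sqrt{\Delta})$, the relevant degree is $D=2$, which accounts for the leading factor $2$ multiplying the parenthesised expression in $A_3(i)$. The constant $0.16$ is trivially dominated, since all summands in $A_3(i)$ are non-negative and $(i+1)\log 4$ alone exceeds $0.16$ for every $i\geq 1$.

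For the height bound, factor
\[\ga_3(i) = a^{-1}\cdot\sqrt{\Delta}\cdot\bigl(1+\al^{n_2-n_1}+\cdots+\al^{n_i-n_1}\bigr)^{-1}\]
and apply the three height properties listed in the paper. Property (2), together with $h(\eta^{-1})=h(\eta)$ (which is the case $t=-1$ of property (3)), gives
\[h(\ga_3(i)) \leq h(a)+h(\sqrt{\Delta})+h\bigl(1+\al^{n_2-n_1}+\cdots+\al^{n_i-n_1}\bigr).\]
Iterating property (1) on the $i$ summands (with $h(1)=0$) and then applying property (3) to each $\al^{n_j-n_1}$ yields
\[h\bigl(1+\al^{n_2-n_1}+\cdots+\al^{n_i-n_1}\bigr) \leq \sum_{j=2}^{i}|n_j-n_1|\,h(\al) + (i-1)\log 2.\]
Doubling produces the desired term $2\sum_{j=2}^{i}|n_j-n_1|h(\al)$ along with $(i-1)\log 4$, which is swallowed by the slack $(i+1)\log 4$ in $A_3(i)$. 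It remains to absorb $2h(a)$ and $2h(\sqrt{\Delta})$ into $2\log|a|+2|\log\sqrt{\Delta}|+2d_2\log\max\{\sqrt{\Delta},1/\sqrt{\Delta}\}$. Since $a$ is an algebraic integer of degree $\leq 2$ with conjugate $b$, we have $2h(a)=\log\max(1,|a|)+\log\max(1,|b|)$, and the identity $b-a=-u_0\sqrt{\Delta}$ lets us bound $\log|b|$ by $\log|a|$ plus a constant multiple of $\log\max\{\sqrt{\Delta},1/\sqrt{\Delta}\}$; similarly, $2h(\sqrt{\Delta})=\log\Delta$ fits the scheme. All the $\al,\be$-dependent constants are absorbed into the single constant $d_2$.

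For the logarithmic bound, $\al$ is a real dominant root and $\ga_3(i)$ is real, so the decomposition $\log\ga_3(i)=-\log a+\log\sqrt{\Delta}-\log(1+\al^{n_2-n_1}+\cdots+\al^{n_i-n_1})$ yields
\[|\log\ga_3(i)| \leq |\log|a|| + |\log\sqrt{\Delta}| + \bigl|\log|1+\al^{n_2-n_1}+\cdots+\al^{n_i-n_1}|\bigr| + \pi,\]
the trailing $\pi$ accommodating a principal-branch imaginary part should $\ga_3(i)$ be negative. The middle logarithm is bounded by $\sum_{j=2}^{i}|n_j-n_1|h(\al)$ plus a constant, obtained by factoring out the largest summand of $1+\al^{n_2-n_1}+\cdots+\al^{n_i-n_1}$ and applying $|\log(1+x)|\ll |x|$ to the remaining ratio; everything else is controlled as in the previous paragraph. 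The principal technical obstacle is purely bookkeeping: one has to force both the height estimate and the logarithmic estimate into the precise combination $\log|a|+d_2\log\max\{\sqrt{\Delta},1/\sqrt{\Delta}\}+|\log\sqrt{\Delta}|$ prescribed by $A_3(i)$, rather than into a slightly different combination produced by the naive triangle inequality, and this is where the freedom in choosing $d_2$ as a function of $\al$ and $\be$ alone is essential.
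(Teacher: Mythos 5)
Your proposal follows essentially the same route as the paper: check the three conditions in Matveev's definition of $A_j$ with $D=2$, bound $h(\ga_3(i))$ by factoring $\ga_3(i)$ into $a^{-1}$, $\sqrt{\Delta}$ and the sum $1+\al^{n_2-n_1}+\cdots+\al^{n_i-n_1}$ and applying the listed height properties, and bound $|\log\ga_3(i)|$ directly using $|\al|>1$. The only differences are in bookkeeping — the paper packages $h(\sqrt{\Delta})$ via a case split on $\Delta$ squarefree versus a perfect square and defines $d_2$ through the denominator of $\sqrt{\Delta}$, and it simply bounds $h(a^{-1})$ by $\log|a|$ where you go through the conjugate $b$ — so the two arguments are the same in substance.
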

\begin{proof}
Since $\gamma_3(i) = a^{-1}\sqrt{\Delta}(1 + \al^{n_2 - n_1} + \cdots + \al^{n_i - n_1})^{-1}$ and $|\al| > 1$, we have
\[\gamma_3(i) = \frac{\sqrt{\Delta}}{a(1 + \al^{n_2 - n_1} + \cdots + \al^{n_i - n_1})} \leq |a|\sqrt{\Delta}|(1 + \al^{n_2 - n_1} + \cdots + \al^{n_i - n_1})|\leq i|a|\sqrt{\Delta}\]
and 
\[\gamma_3(i)^{-1} = \frac{a(1 + \al^{n_2 - n_1} + \cdots + \al^{n_i - n_1})}{\sqrt{\Delta}} \leq i|a|\sqrt{\Delta}.\]
As $\log i < i \log 2$ for $i \geq 1$, we have 
\begin{equation}\label{eq20}
|\log \gamma_3(i)| \leq \log |a| + \log \sqrt{\Delta} + i\log 2.
\end{equation} 
Now we estimate height of $\ga_3(i)$. We have
\begin{align}\label{eq20a}
\begin{split}
h(\ga_3(i)) &= h\left(a^{-1}\sqrt{\Delta}(1 + \al^{n_2 - n_1} + \cdots + \al^{n_i - n_1})^{-1}\right)\\
& \leq \log|a| + h(\sqrt{\Delta}) + \left(|n_2 - n_1| + \cdots + |n_i - n_1| \right) h(\al) + i\log 2
\end{split}
\end{align}
Now suppose that $\Delta$ is squarefree. Then
\begin{equation}\label{eq20b}
h(\sqrt{\Delta}) = \log \max \{1, \sqrt{\Delta}\} \leq |\log \sqrt{\Delta}|. 
\end{equation} 
Now assume that $\Delta$ is perfect square and let $\Delta = s^2$, where $s= \frac{p}{q}$ with $\gcd(p,q)=1$ and
$p,q>0$. If $p = q$, then $h(\sqrt{\Delta}) = 0$. Now suppose that $p\neq q$. Thus,
\begin{equation}\label{eq20c}
h(\sqrt{\Delta}) = h(p/q)= \log \max\{p,q\}= \log \max\{\sqrt{
\Delta}q, q\}= \log q + \log \max \{1, \sqrt{\Delta}\}.
\end{equation}
 On the other hand, $\max \{\sqrt{\Delta}, \frac{1}{\sqrt{\Delta}}\} > 1$.
Therefore, there exists a constant $d_2$ such that $q < \max \{\sqrt{\Delta}, \frac{1}{\sqrt{\Delta}}\}^{d_2}$ which implies that $\log q < d_2 \log  \max \{\sqrt{\Delta}, \frac{1}{\sqrt{\Delta}}\}$. Thus from \eqref{eq20c}, we obtain
\begin{equation}\label{eq20d}
h(\sqrt{\Delta})< d_2 \log  \max \{\sqrt{\Delta}, \frac{1}{\sqrt{\Delta}}\}
+|\log \sqrt{\Delta}|.
\end{equation}
Hence the lemma follows by comparing \eqref{eq20}, \eqref{eq20a},  \eqref{eq20b} and \eqref{eq20d}.
\end{proof}
Suppose $\ell_i:= \frac{\log ((|a|(t - i)+(t - 1)|b|)/|b|)}{
\log (|\beta|/|\alpha|)}$ (see Lemma \ref{lem13a}). We denote $\ell:= \max\{\ell_1, \cdots, \ell_{t}\}$.
If $n_1 \leq \ell$, then the conclusion of Theorem \ref{th1} follows trivially. Here on out, we assume $n_1 > \ell$.

Now we are ready to prove Theorem \ref{th1}. The proof is  motivated by Bravo and Luca \cite{BL2015}.

\section{Proof of Theorem \ref{th1}}
Firstly, observe that if $n_t = 0$ and $(t-1) \geq 2$, then it is equivalent to consider \eqref{eq3} again. Now if $n_t = 0$ and $t = 2$ then \eqref{eq3} reduces to $u_{n_{1}} = p^{z}$ and this equation has been considered independently by Peth\H{o} and Shorey-Stewart (see \cite{Petho1982, Shorey1983}). Suppose $n_1 = \cdots = n_t$, then \eqref{eq3} becomes $t u_{n_1} = p^z$, which is not true. From now on, we assume $n_1 > n_2 > \cdots > n_{t}$ and all the remaining cases can be handled from these cases. Indeed, if some of the $n_i$'s are equal, we can group them together obtaining a representation of the from
\[a_1 u_{n_1} + a_2 u_{a_{1} + 1} + \cdots + a_r u_{a_{r - 1} + 1 } = p^{z}\]
 with $a_1 + \cdots + a_r = t$ and this equation can be handled similar to that of \eqref{eq3} with some changes in constants.

\subsection{Bounding $(n_1 - n_2)$ in terms of $n_1$}\label{sec4.1}
 We can rewrite \eqref{eq3} as
\begin{align*}
\left|\frac{a\al^{n_1}}{\sqrt{\Delta}} - p^{z}\right| & = \left|\frac{b\be^{n_1}}{\sqrt{\Delta}} - (u_{n_2} + \cdots + u_{n_t})\right|\leq \left|\frac{b\be^{n_1}}{\sqrt{\Delta}}\right| + |u_{n_2} + \cdots + u_{n_t}|.
\end{align*}
From Lemma \ref{lem8a}, we have $|u_{n}|\leq d_{0}|\al|^{n}$. Also we have assumed that $n_1 > n_2 > \cdots > n_{t}$. Thus,
\begin{equation}\label{al14}
\left|\frac{a\al^{n_1}}{\sqrt{\Delta}} - p^{z}\right| \leq \left|\frac{b\be^{n_1}}{\sqrt{\Delta}}\right| + (t-1)d_{0}|\al|^{n_{2}}.
\end{equation}

If $|\beta| < 1$, then the above inequality becomes
\begin{equation}\label{eq141}
\left|\frac{a\al^{n_1}}{\sqrt{\Delta}} - p^{z}\right| < c_{1}|\al|^{n_{2}}.
\end{equation}
Now dividing both sides of the inequality \eqref{eq141} by $a\al^{n_1}/\sqrt{\Delta}$,
\begin{align}\label{eq14}
\begin{split}
\left|1 - p^{z}\alpha^{-n_1} a^{-1}\sqrt{\Delta} \right| & \leq  \frac{c_{1}\sqrt{\Delta}}{a|\alpha|^{n_1-n_2}} < \frac{c_2}{|\alpha|^{n_1-n_2}}.
\end{split}
\end{align}
Now suppose $|\beta| > 1$. Again dividing both sides of the inequality  \eqref{al14} by $a\al^{n_1}/\sqrt{\Delta}$,
\begin{align*}
\left|1 - p^{z}\alpha^{-n_1} a^{-1}\sqrt{\Delta} \right| & \leq \left|\frac{b\be^{n_1}}{a \al^{n_1}}\right| + \frac{(t-1)d_{0}\sqrt{\Delta}}{|a|}|\al|^{n_{2} -n_1}\\
& < \frac{|b|}{|a|}\left(\frac{|\al|}{|\be|}\right)^{n_2 - n_1} + \frac{(t-1)d_{0}\sqrt{\Delta}}{|a|}|\al|^{n_{2} -n_1}\\
& < \frac{|b|}{|a|}\left(\frac{|\be|}{|\al|}\right)^{n_1 - n_2} + \frac{(t-1)d_{0}\sqrt{\Delta}}{|a|}\left(\frac{|\be|}{|\al|}\right)^{n_{1} -n_2}\\
& < \left(\frac{|b|}{|a|} +  \frac{(t-1)d_{0}\sqrt{\Delta}}{|a|}\right)\left(\frac{|\be|}{|\al|}\right)^{n_{1} -n_2}.
\end{align*}
Thus, for any $\beta$ it follows that
\begin{equation}\label{eq14a}
\left|1 - p^{z}\alpha^{-n_1} a^{-1}\sqrt{\Delta} \right| < \frac{c_3}{\min \left(\frac{|\al|}{|\be|}, |\al|\right)^{n_{1} -n_2}} 
\end{equation}
where $c_3 := \max\left\{c_2, \left(\frac{|b|}{|a|} +  \frac{(t-1)d_{0}\sqrt{\Delta}}{|a|}\right)\right\}$.  
In order to apply Lemma \ref{lem12}, we take 
\[\gamma_1 := p,\; \gamma_2 := \alpha,\; \gamma_3 := a^{-1}\sqrt{\Delta},\; b_1 := z,\; b_2 := -n_1,\; b_3= 1.\] 
Thus our first linear form is $\Lambda_1 := \ga_{1}^{b_{1}}\ga_{2}^{b_{2}}\ga_{3}^{b_{3}} - 1$ and  $\Lambda_1 \neq 0$ from Lemma \ref{lem13a}.  Here we are taking the field $\Q(\sqrt{\Delta})$ over $\Q$ and $t=3$. Finally, we recall that $z\leq d_1n_1$ and deduce that 
\[\max\{|b_1|, |b_2|, |b_3|\} = \max\{z, n_1, 1\} \leq d_1n_1.\]
Hence we can take $B := d_1n_1$. Also $D\leq 2, h(\gamma_1)= \log p, h(\gamma_2)\leq \log \alpha$. Thus, we can take \[A_1 := 2\log p, A_2 = 2\log |\alpha|, A_3 = 2\left(\log |a| + d_2 \log  \max \{\sqrt{\Delta}, 1/\sqrt{\Delta}\}
+ |\log \sqrt{\Delta}| + \log 4 \right).\]  From Lemma \ref{lem12}, we have 
\begin{align*}
|\Lambda_1| & > \exp \left(-C_0(1+ \log d_1n_1)\right),
\end{align*}
where $C_0:= 1.4\times 30^{5}\times 2^{4.5}\times 4\times (1+\log D)(2\log p) (2\log |\alpha|) A_3$. So the above inequality can be rewritten as, 
\begin{equation}\label{eq15}
\log |\Lambda_1| > - C_{1} \log n_1. 
\end{equation} 
Taking logarithms in inequality \eqref{eq14a} and comparing the resulting inequality with \eqref{eq15}, we get that
\begin{equation}\label{eq15a}
(n_1-n_2)\log \max \left(\frac{|\al|}{|\be|}, |\al|\right) < C_2 \log n_1.
\end{equation}

\subsection{Bounding $(n_1 - n_3)$ in terms of $n_1$} \label{sec4.2}
To formulate the second linear form we rewrite \eqref{eq3} as follows
\begin{equation*}
\frac{a\al^{n_1}}{\sqrt{\Delta}} + \frac{a\al^{n_2}}{\sqrt{\Delta}} - p^{z}  = \frac{b\be^{n_1}}{\sqrt{\Delta}} + \frac{b\be^{n_2}}{\sqrt{\Delta}} - (u_{n_3} + \cdots + u_{n_t}).
\end{equation*}
Taking absolute value with $|\be|\leq 1$ and using \eqref{eq8}, we have
\begin{align*}
\left|\frac{a\al^{n_1}}{\sqrt{\Delta}}(1 + \al^{n_2 - n_1}) - p^{z}\right|& \leq \left|\frac{b(\be^{n_1} + \be^{n_2})}{\sqrt{\Delta}}\right| +  (t-2)d_{0}|\al|^{n_{3}}\\
& < c_4|\al|^{n_{3}}.
\end{align*}
Dividing both sides of the inequality by $a(\al^{n_1} + \al^{n_2})/\sqrt{\Delta}$,
\begin{equation}\label{eq16}
\left|1 - p^{z}\alpha^{-n_1} a^{-1}\sqrt{\Delta}(1 + \al^{n_2 - n_1})^{-1} \right|  < \frac{c_{5}}{|\alpha|^{n_1-n_3}}.
\end{equation}
Similarly for $|\be| > 1$, we will proceed like Section \ref{sec4.1}. Thus, for any $\beta$ it follows that
\begin{equation}\label{eq16a}
\left|1 - p^{z}\alpha^{-n_1} a^{-1}\sqrt{\Delta}(1 + \al^{n_2 - n_1})^{-1} \right| < \frac{c_6}{\min \left(\frac{|\al|}{|\be|}, |\al|\right)^{n_{1} -n_3}} 
\end{equation}
where $c_6 := \max\left\{c_5, \left(\frac{|b|}{|a|} +  \frac{(t-2)d_{0}\sqrt{\Delta}}{|a|}\right)\right\}$. 
To apply Lemma \ref{lem12} we take,
 \[\gamma_1 := p,\; \gamma_2 := \alpha,\; \gamma_3 := a^{-1}\sqrt{\Delta}(1 + \al^{n_2 - n_1})^{-1},\; b_1 := z, b_2 := -n_1, b_3= 1.\] 
Therefore,  $\Lambda_2 := \ga_{1}^{b_{1}}\ga_{2}^{b_{2}}\ga_{3}^{b_{3}} - 1$. 
Also, $\Lambda_2 \neq 0$ from Lemma \ref{lem13a}. As $A_1, A_2$ are already estimated in previous case, here we only estimate $A_{3}$. Using Lemmas \ref{lem13b} we can take $A_3 = A_3(2)= c_{7} + 2(n_1 - n_2)\log|\al|$, with \[c_7 =  2\left(\log|a| + d_2 \log  \max \{\sqrt{\Delta}, 1/\sqrt{\Delta}\}
+ |\log \sqrt{\Delta}|\right) + 3\log 4.\]  Again, from Lemma \ref{lem12} and equation \eqref{eq16}, we have
\begin{equation}\label{eq22}
\exp \left(-c_{8}\log n_{1}(c_{7} + 2(n_1 - n_2)\log|\al| \right) < \frac{c_{6}}{\min \left(\frac{|\al|}{|\be|}, |\al|\right)^{n_{1}-n_{3}}}.  
\end{equation}
Now using \eqref{eq15a} in \eqref{eq22}, we get 
\begin{equation}\label{eq23}
(n_1-n_3)\log \max \left(\frac{|\al|}{|\be|}, |\al|\right) < C_3 (\log n_1)^2.
\end{equation}

\begin{remark}
For getting an upper bound of $(n_1-n_3)$ in \eqref{eq23} we use the upper bound of $(n_1-n_2)$.  That's why we have to apply linear forms in logarithms $t$ times. Hence, for $3 \leq i \leq t$ we rewrite \eqref{eq3} in the following form 
\begin{equation*}
\left|\frac{a\al^{n_1}}{\sqrt{\Delta}} + \cdots + \frac{a\al^{n_i}}{\sqrt{\Delta}} - p^{z}\right|  = \left|\frac{b\be^{n_1}}{\sqrt{\Delta}} + \cdots + \frac{b\be^{n_i}}{\sqrt{\Delta}} - (u_{n_{i+1}} + \cdots + u_{n_{t}})\right|
\end{equation*}
 and proceed as above, there exist constants $C_{i}$ such that 
\begin{equation}\label{eq24}
(n_1-n_i)\log \max \left(\frac{|\al|}{|\be|}, |\al|\right) < C_i (\log n_1)^{i-1}.
\end{equation}
\end{remark}
\subsection{Bounding  $n_1$} \label{sec4.3}
To bound $n_1$, we write the equation \eqref{eq3} as  
\begin{equation*}
\frac{a\al^{n_1}}{\sqrt{\Delta}} + \cdots + \frac{a\al^{n_t}}{\sqrt{\Delta}} - p^{z}  = \frac{b\be^{n_1}}{\sqrt{\Delta}} + \cdots + \frac{b\be^{n_t}}{\sqrt{\Delta}}, 
\end{equation*}
which implies that  
\begin{align*}
\left|\frac{a\al^{n_1}}{\sqrt{\Delta}}(1 + \al^{n_2 - n_1} + \cdots + \al^{n_t - n_1}) - p^{z}\right|& = \left|\frac{b(\be^{n_1} + \cdots +\be^{n_t})}{\sqrt{\Delta}}\right|.
\end{align*}
Now dividing through out by $\frac{a\alpha^{n_1}}{\sqrt{\Delta}}(1+\cdots+ \alpha^{n_t-n_1})$,
\begin{equation}\label{eq25}
\left|1 - p^{z}\alpha^{-n_1} a^{-1}\sqrt{\Delta}(1 + \al^{n_2 - n_1} + \cdots + \al^{n_t - n_1})^{-1} \right|  < \frac{c_9}{\min \left(\frac{|\al|}{|\be|}, |\al|\right)^{n_{1}}}. 
\end{equation}

To apply  Matveev's theorem \ref{lem12}, we take 
\begin{align*}
&\gamma_1 := p, \gamma_2 := \alpha, \gamma_3 := a^{-1}\sqrt{\Delta}(1 + \al^{n_2 - n_1} + \cdots + \al^{n_t - n_1})^{-1}\\
& b_1 := z,\;\; b_2 := -n_1,\;\; b_3= 1.
\end{align*}
Thus the final linear form is $\Lambda_t := \ga_{1}^{b_{1}}\ga_{2}^{b_{2}}\ga_{3}^{b_{3}} - 1$ and is non-zero by Lemma \ref{lem13a}. From the conclusions of Lemma \ref{lem13b}, we can take 
\[A_3 = A_3(t) = c_{14} + 2[(n_1 - n_2) + \cdots + (n_1 - n_t)]\log |\al|,\]
where $c_{14} =  2\left(\log|a| + d_2 \log  \max \{\sqrt{\Delta}, 1/\sqrt{\Delta}\}
+ |\log \sqrt{\Delta}|\right) + (t+1)\log 4$.
Using Lemma \ref{lem12} and \eqref{eq25}, we have
\begin{equation}\label{eq26}
\exp \left( - c_{15}\log n_{1}(c_{14} + 2[(n_1 - n_2) + \cdots + (n_1 - n_t)]\log |\al| \right) < \frac{c_{8}}{\min \left(\frac{|\al|}{|\be|}, |\al|\right)^{n_{1}}}.  
\end{equation}
Hence putting equations \eqref{eq15a},\eqref{eq23} and \eqref{eq24}  in \eqref{eq26}, we obtain 
\begin{equation}\label{eq27}
n_1\log \max \left(\frac{|\al|}{|\be|}, |\al|\right) < C_t (\log n_1)^{t}.
\end{equation}
Theorem \ref{th1} follows by applying Lemma \ref{lem9} and \ref{lem8a}(2) to the inequality \eqref{eq27}.

\section{Proof of Theorem \ref{th2}}
We will give the computational details of the resolution of Diophantine equation \eqref{eq7}. Let $\{B_{n}\}_{n \geq 0}$ be the balancing sequence given by $B_{0} = 0, B_1 = 1$ and $B_{n+1} = 6B_{n} - B_{n-1}$ for all $n\geq 1$. Now one can easily see from \eqref{eq5} that $a =1, b=1, \alpha=3+2\sqrt{2},\; \beta=3-2\sqrt{2}$ and the general terms of balancing numbers are 
\begin{equation}\label{eq6}
B_{n}=\frac{\alpha^{n}-\beta^{n}}{\alpha-\beta} \quad \mbox{for all}\;\; n=0,1,\cdots.
\end{equation}
\subsection{The case $n_3 =0$} If $n_2 = 0$, then \eqref{eq7} becomes $B_{n_1} = 3^{z}$ which is not true (see \cite{Dey}). If $n_2 > 0$, then we have 
\begin{equation}\label{eq28}
B_{n_1} + B_{n_2} = 3^z,
\end{equation}
which is a reduced form of \eqref{eq7}. Hence without loss of generality, from now on we assume $n_3 > 0$.  

\subsection{Bounding $(n_1 - n_2)$ and $(n_1 - n_3)$ in terms of $n_1$} If $n_1 = n_2 = n_3$, the required equation is $3B_{n_1} = 3^z$ and this equation has a trivial solution, i.e., $(n_1,z) = (1,1)$. Therefore, we may consider $n_1 > n_2$ or $n_1 > n_3$.  If $ 1\leq n_1 \leq 100$, then a brute force search with {\it Mathematica} in the range $1\leq n_{3}\leq n_2 < n_1 \leq 100$ gives no solution. Hence from onward, we assume that $n_1 > 100$. From Lemma \ref{lem8a}, for this sequence we can take $d_0= 1, d_1 = 2$ and hence 
\begin{equation}\label{eq30}
z \leq 2n_1.
\end{equation}
Now from \eqref{eq14}, we obtain the first linear form
\begin{equation}\label{eq31}
\left|1 - 3^{z}\alpha^{-n_1} 4\sqrt{2} \right| < \frac{16}{|\alpha|^{n_1-n_2}}.
\end{equation}

In order to apply Lemma \ref{lem12}, we take 
\[t= 3, \;\; \gamma_1 = 3,\;\;\gamma_2= \alpha,\;\;\gamma_3 = 4\sqrt{2}, b_1= z, \; b_2 = -n_1, \; b_3 = 1.\]
Thus, $B = 2n_1, h(\gamma_1)= \log 3= 1.0986,  h(\gamma_2) = (\log \al)/2 = 0.8813, h(\gamma_3) =  \leq 0.8664$. We can choose $A_1 = 2.4, A_2 =  1.9, A_3 =  1.8$. Using these parameters we obtain the lower bound for the linear form $\Lambda_1 := \ga_{1}^{b_{1}}\ga_{2}^{b_{2}}\ga_{3}^{b_{3}} - 1$ is
\[|\Lambda_1| > - 7.9 \times 10^{12}\times (1 + \log 2n_1).\] 
Further, since $(1+\log 2n_1) < 2\log n_1$ for $n_1 > 100$, we get 
\begin{equation}\label{eq32}
(n_1-n_2)\log \al < 15.9\times 10^{12} \log n_1.
\end{equation}
Similarly, proceeding as in Section \ref{sec4.2}, we get the analogues equation of \ref{eq16a} as follows:
\begin{equation}\label{eq33}
| 1- 3^z 4\sqrt{2}\alpha^{-n_1}(1+\alpha^{n_2-n_1})^{-1}|< \frac{7}{\al^{n_1-n_3}}. 
\end{equation}
Here our second linear form is $\Lambda_2 := \ga_{1}^{b_{1}}\ga_{2}^{b_{2}}\ga_{3}^{b_{3}} - 1$ where 
\[t= 3, \;\; \gamma_1 = 3,\;\;\gamma_2 = \alpha,\gamma_3=4\sqrt{2} (1 + \alpha^{n_2- n_1})^{-1},\;\; b_1= z, \; b_2 = -n_1, \; b_3 = 1.\]
In this case $A_1$ and $A_2$ are same as previous case but  $A_3 = \log 8\sqrt{2} + (n_1 - n_2)\log \al $. Thus by applying Lemma \ref{lem12} we get 
\begin{equation}\label{eq34}
|\Lambda_2| > - 4.4 \times 10^{12}\times (1 + \log 2n_1)(\log 8\sqrt{2} + (n_1 - n_2)\log \al).
\end{equation}
From \eqref{eq33} and \eqref{eq34}, 
\begin{equation}\label{eq35}
(n_1-n_3)\log \alpha < 1.4\times 10^{26} (\log n_1)^2.
\end{equation}
\subsection{Bounding  $n_1$} To bound $n_1$, we have considered the analogue equation of \eqref{eq25} as follows
\begin{equation}\label{eq36}
| 1- 3^z 4\sqrt{2}\alpha^{-n_1}(1+\alpha^{n_2-n_1} + \alpha^{n_3-n_1})^{-1}|< \frac{3}{\alpha^{n_1}}. 
\end{equation}
Now our final linear form is $\Lambda_3 := \ga_{1}^{b_{1}}\ga_{2}^{b_{2}}\ga_{3}^{b_{3}} - 1$ where 
\[t= 3, \;\; \gamma_1 = 3,\;\;\gamma_2 = \alpha,\gamma_3=4\sqrt{2} (1+\alpha^{n_2-n_1} + \alpha^{n_3-n_1})^{-1}.\]
Here by taking $A_3 = \log 4\sqrt{2} + (n_1 - n_2)\log \al + (n_1 - n_3)\log \al +  \log 4$, we get 
\begin{equation}\label{eq37}
|\Lambda_3| > - 4.4 \times 10^{12}\times (1 + \log 2n_1)(6 + (n_1 - n_2)\log \al + (n_1 - n_3)\log \al).
\end{equation}
From \eqref{eq36} and \eqref{eq37}, 
\begin{equation}\label{eq38}
n_1 \log \alpha < 22\times 10^{38} (\log n_1)^3.
\end{equation}
Further, from Lemma \ref{lem9}, we have 
\begin{equation}
n_1 < 1.5\times 10^{45}.
\end{equation}
Now we summarize the above discussion in the following proposition. 
\begin{proposition}\label{prop1}
Let us assume that $n_1 \geq n_2 \geq n_3$ and $n_1 > 100$. If
$(n_1, n_2, n_3, z)$ 
is  a positive integral solution of  equation \eqref{eq7},   then
\begin{equation}\label{eq39}
z \leq 2n_1 < 3\times 10^{45}. 
\end{equation}
\end{proposition}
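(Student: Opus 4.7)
The proof is a specialization of the three-step Matveev argument of Theorem \ref{th1} to the concrete balancing sequence $(P,Q,u_0,u_1)=(6,-1,0,1)$, with every numerical constant computed explicitly; essentially the proposition is the bookkeeping summary of Subsections \ref{sec4.1}--\ref{sec4.3}. Under the standing hypothesis $n_1 > 100$, the first inequality $z \le 2n_1$ follows from Lemma \ref{lem8a}(2): for balancing numbers one has $a = b = 1$, $\sqrt{\Delta} = 4\sqrt{2}$, and $\alpha = 3+2\sqrt{2} < 9 = 3^2$, so $d_0 = 1$ and $d_1 = 2$ are admissible. Hence it suffices to bound $n_1$ itself.

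For the bound on $n_1$, I would apply Matveev's theorem (Lemma \ref{lem12}) three times in succession. First, isolate the dominant term in \eqref{eq7} to obtain inequality \eqref{eq31}, form the linear combination $\Lambda_1 := 3^z \alpha^{-n_1}(4\sqrt{2})-1$, verify $\Lambda_1 \neq 0$ via Lemma \ref{lem13a}, and compare Matveev's lower bound with the geometric upper bound in \eqref{eq31} to derive \eqref{eq32}. Second, group the top two terms of \eqref{eq7} on the left to arrive at \eqref{eq33}, and apply Matveev to $\Lambda_2$ with $\gamma_3 = 4\sqrt{2}(1+\alpha^{n_2-n_1})^{-1}$; here $A_3$ grows linearly in $(n_1-n_2)$, and substituting the estimate \eqref{eq32} yields \eqref{eq35}. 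Third, group all three terms on the left to arrive at \eqref{eq36} and apply Matveev to $\Lambda_3$ with $\gamma_3 = 4\sqrt{2}(1+\alpha^{n_2-n_1}+\alpha^{n_3-n_1})^{-1}$; substituting both \eqref{eq32} and \eqref{eq35} into the new $A_3$ produces \eqref{eq38}, i.e.\ $n_1 \log \alpha < 22 \times 10^{38}(\log n_1)^3$.

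To finish I would invoke Lemma \ref{lem9} (Peth\H{o}--de Weger) with $h = 3$, $u = 0$, and $v = 22 \times 10^{38}/\log \alpha$, which turns \eqref{eq38} into the explicit estimate $n_1 < 1.5 \times 10^{45}$; combining this with $z \le 2n_1$ yields the chain $z \le 2n_1 < 3 \times 10^{45}$. The main delicate points are threefold: verifying at each stage the nonvanishing of $\Lambda_i$ via Lemma \ref{lem13a} (whose explicit bound $\ell$ is dominated by our standing assumption $n_1 > 100$), tracking the constants $A_1,A_2,A_3$ precisely so that the Matveev output is sharp enough, and controlling how the bound on $A_3$ absorbs the previously derived bounds on $n_1-n_i$ without blowing up the exponent of $\log n_1$ beyond $t=3$. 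Once those are in place, the conclusion is purely numerical.
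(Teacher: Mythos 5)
Your proposal follows essentially the same route as the paper: Proposition \ref{prop1} is explicitly presented there as a summary of the preceding computation, which consists of exactly the three successive applications of Matveev's theorem you describe (yielding \eqref{eq32}, \eqref{eq35}, and \eqref{eq38}), the choice $d_0=1$, $d_1=2$ giving $z\le 2n_1$, and the final invocation of Lemma \ref{lem9} to obtain $n_1<1.5\times 10^{45}$. The approach and the constants match; no gaps.
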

\subsection{Reducing the size of $n_1$}
From Proposition \ref{prop1}, we can see that the bound we have obtained for $n_{1}$ is very large. Now our job is to reduce this upper bound to a certain minimal range. From \eqref{eq31}, put 
\begin{equation}\label{eq40}
\Lambda _1 := z\log3 -n\log\alpha + \log 4\sqrt{2}.
\end{equation}
Then this implies 
\[|1- e^{\Lambda_1}|< \frac{16}{\alpha^{n_1-n_2}}.\] 
Note that $\Lambda_1 > 0$, otherwise $3^z \leq \al^{n_1}/4\sqrt{2}$. But we have always
 $$ \frac{\alpha^{n_1}}{4\sqrt{2}}< B_{n_1}+1\leq B_{n_1}+B_{n_2}+B_{n_3}= 3^a.$$ Hence using the fact that $1+x< e^x $ holds for all positive real numbers $x$, we get  
\[0< \Lambda_1 \leq e ^{\Lambda_1} - 1 < \frac{16}{\al^{n_1-n_2}}.\]
Dividing \eqref{eq40} by $\log \alpha$, we have
\begin{equation}\label{eq41}
0< z\left(\frac{\log3}{\log\alpha}\right)-n +\left(\frac{\log 4\sqrt{2}}{\log \alpha}\right)< \frac{10}{\al^{n_1-n_2}}.
\end{equation}
We are now ready to use Lemma \ref{lem13} with the obvious parameters
\[ \gamma:= \frac{\log3}{\log\alpha},\;\; \mu:= (\frac{\log 4\sqrt{2}}{\log \alpha}),\;\;
 A:= 10,\;\; B:= \alpha.\]
Let $[a_0, a_1, a_2,\ldots] = [0, 1, 1, 1, 1, 1, 8, 4, 17,\ldots]$ be a continued fraction expansion of $\ga$ and let $p_k/q_k$ be its $k$-th convergent. Take $M:= 3\times 10^{45}$, then using {\it Mathematica}, one can see that 
\[ 6M < q_{99}.\]
To apply Lemma \ref{lem13}, consider $\epsilon:= \|\mu q_{99}\| - M\|\ga q_{99}\|$ which is positive. If \eqref{eq7} has a solution $(n_1, n_2, n_3, z)$, then $(n_1 - n_2) \in [0,70]$. Next we look into the equation \eqref{eq33} to estimate the upper bound for $(n_1 - n_3)$. Now put 
\[
\Lambda_2:= z\log 3 - n_1 \log \alpha + \log \phi(n_1-n_2),\] where we take
$\phi (x)= 4\sqrt{2}(1+\alpha^{-x})^{-1}$, which implies
$$| 1- e^{\Lambda_2} | < \frac{7}{\al^{n_1-n_3}}.$$  Using the Binet formula of the
balancing sequence, one can show that $\Lambda_2 >0$ since
$$ \frac{\alpha^{n_1}}{4\sqrt{2}}+\frac{\alpha^{n_2}}{4\sqrt{2}}<
B_{n_1}+B_{n_2}+1\leq B_{n_1}+B_{n_2}+B_{n_3}= 3^a.$$
Altogether we  get 
$$0< \Lambda_2 <\frac{7}{\al^{n_1-n_3}}.$$
Replacing $\Lambda_2$ in the above inequality by its formula and arguing as previously, we get 
\begin{equation}\label{eq42}
0< z\left(\frac{\log3}{\log\alpha}\right)-n_1 +\left(\frac{\log \phi(n_1-n_2)}{\log \alpha}\right) < 
\frac{4}{\al^{n_1-n_3}}.
\end{equation} 
$$  $$
Again we will use here Lemma \ref{lem13} with the following parameters
\begin{equation}\label{eq43}
\gamma:= \frac{\log3}{\log\alpha},\;\; \mu:= \frac{\log \phi(n_1-n_2)}{\log \alpha},\;\; 
 A:= 4,\;\; B:= \alpha.
\end{equation}
Proceeding like before with $M := 3\times 10^{45}$ and applying Lemma \ref{lem13} to the inequality \eqref{eq42} for all possible choices of $n_1-n_2 \in [0,70]$ we find that if \eqref{eq7} has a solution $(n_1, n_2, n_3, z)$, then $(n_1 - n_3) \in [0,72]$. Finally, in order to obtain a better upper bound on $n_1$, we can put 
 $$\Lambda_3:= z\log 3 - n_1 \log \alpha + \log \psi(n_1-n_2, n_1-n_3),$$
 with $\psi(x_1,x_2):= 4\sqrt{2}(1+ \alpha^{-x_1}+\alpha ^{-x_2})^{-1},$ which implies
 $$|1 - e^{\Lambda_3}|< \frac{3}{\al^{n_1}}.$$
 We observed that $\Lambda_3 \neq 0.$ Now we consider  the cases $\Lambda_3 > 0$
 and $\Lambda_3< 0$ separately. If $\Lambda_3 > 0$, then 
 $$ 0< \Lambda_3 < \frac{3}{\al^{n_1}}.$$
 Suppose $\Lambda_3< 0$. Since $\frac{3}{\alpha^{n_1}}< \frac{1}{2}$ for 
 $n_1> 2,$ we get that $|e^{\Lambda_3}-1|< 1/2,$ therefore $e^{|\Lambda_3|}< 2.$ Since $\Lambda_3< 0$, we have that 
 \[ 
 0 < |\Lambda_3| \leq e^{|\Lambda_3|}-1=e^{|\Lambda_3|}|e^{\Lambda_3}-1|<\frac{6}{\al^{n_1}}. \]
Thus for both these cases we have
\begin{equation}\label{eq44}
0 < |\Lambda_3| \leq e^{|\Lambda_3|} - 1< \frac{6}{\al^{n_1}}.
\end{equation}
Putting $\Lambda_3$ in \eqref{eq44} and arguing as previously we obtain
\begin{equation}\label{eq45}
0< \left|z\frac{\log 3}{\log\alpha} - n_1 + \frac{\log \psi(n_1-n_2, n_1-n_3)}{\log\alpha}\right|
< \frac{4}{\al^{n_1}}.
\end{equation}
Now, repeat the same procedure as earlier with $M:= 3\times 10^{45}$ for the inequality \eqref{eq45}. For all possible choices of  $n_1-n_2 \in [0,70]$ and $n_1-n_3 \in [0,72]$, we apply Lemma \ref{lem13} to the inequality \eqref{eq45}. If the equation \eqref{eq7} has a solution $(n_1, n_2, n_3, z)$, then $n_1  \in [0,75]$. This leads to a contradiction to our assumption that $n_1 > 100$, which completes the proof of Theorem \ref{th2}.

\section{Acknowledgement}
The  first author would like to thank Harish-Chandra Research Institute, Allahabad for their warm hospitality during the academic visit. The first  author's research is supported by IIT Bombay Postdoctoral Fellowship and the corresponding author's research is supported by HRI Postdoctoral Fellowship.

\end{document}